\newtheorem{corollary}{Corollary}
\newtheorem{definition}{Definition}
\newtheorem{remark}{Remark}
\newtheorem{proposition}{Proposition}
\newtheorem{example}{Example}
\newcommand{\R}{\mathbb{R}}
\newcommand{\ra}{\rightarrow}
\newcommand{\lcf}{\lbrack\! \lbrack}
\newcommand{\rcf}{\rbrack\! \rbrack}
\newcommand{\proa}{A^*G \mbox{$\;$}_{\tau^*} \kern-3pt\times_\alpha
G \mbox{$\;$}_\beta \kern-3pt\times_{\tau^*} A^*G}
\def\lcf{\lbrack\! \lbrack}
\def\rcf{\rbrack\! \rbrack}
\begin{document}

\title{
Retraction maps in optimal control of nonholonomic systems}

\author{Alexandre Anahory Simoes, Mar\'ia Barbero Liñ\'an, Anthony Bloch, Leonardo Colombo, David Mart\'in de Diego. 
\thanks{A. Anahory Simoes (alexandre.anahory@ie.edu) is with the School of Science and Technology, IE University, Spain.}
\thanks{ M. Barbero  Li\~n\'an (m.barbero@upm.es) is with Departamento de Matem\'atica Aplicada, Universidad Polit\'ecnica de Madrid, Av. Juan de Herrera 4, 28040 Madrid, Spain.}
\thanks{A. Bloch (abloch@umich.edu) is with Department of Mathematics, University of Michigan, Ann Arbor, MI 48109, USA.}
\thanks{L. Colombo (leonardo.colombo@car.upm-csic.es) is with Centre for Automation and Robotics (CSIC-UPM), Ctra. M300 Campo Real, Km 0,200, Arganda
del Rey - 28500 Madrid, Spain.} \thanks{David Mart\'in de Diego (david.martin@icmat.es) is with the Institute of Mathematical Sciences (CSIC-UAM-UCM-UC3M). Calle Nicol\'as Cabrera 13-15, Cantoblanco, 28049, Madrid, Spain.} \thanks{The authors acknowledge financial support from Grants  PID2022-137909NB-C21  and  CEX2023-001347-S  funded by MCIN/AEI/10.13039/501100011033. A.B. was partially supported by NSF grant  DMS-2103026, and AFOSR grants FA
9550-22-1-0215 and FA 9550-23-1-0400.
}}%

\maketitle

\begin{abstract}
    In this paper, we compare the performance of different numerical schemes in approximating Pontryagin's Maximum Principle's necessary conditions for the optimal control of nonholonomic systems. Retraction maps are used as a seed to construct geometric integrators for the corresponding Hamilton equations. First, we obtain an intrinsic formulation of a discretization map on a distribution $\mathcal{D}$. Then, we illustrate this construction on a particular example for which the performance of different symplectic integrators is examined and compared with that of non-symplectic integrators.
\end{abstract}


\section{Introduction}

Nonholonomic control systems are characterized by non-integrable constraints on their velocities. The optimal control problem for a nonholonomic system is typically formulated as minimizing a cost functional subject to nonholonomic dynamics. Pontryagin’s Maximum Principle (PMP) provides first-order necessary conditions for optimality by introducing costate variables for the dynamics and additional constraints (see \cite{B} and references there in). PMP yields a Hamiltonian system on the cotangent bundle of the state space that the optimal trajectory must satisfy. 

Despite the Hamiltonian character of PMP equations, the question of whether symplectic integrators perform better than non-symplectic integrators is difficult to answer. It usually depends on additional structure of each problem. The report \cite{chyba} explores the same problem in the particular case of the Martinet system, an example of a kinematic nonholonomic control system.


There are two main families of methods to approach the numerical approximation of this problem: either one discretizes both the cost function and the control dynamics and tries to solve the resulting discrete nonlinear minimization problem; or one obtains necessary conditions for the optimal trajectory, then discretizes the optimality conditions. The former are called direct methods, the latter are called indirect methods. We will focus on the latter. 



The notion of retraction map is an essential tool in different research areas like optimization theory, numerical analysis and interpolation (see \cite{AbMaSeBookRetraction} and references therein). A retraction map allows to generalize linear-search methods in Euclidean spaces to general manifolds by providing an intrinsic framework to move between two points on a manifold in the direction of a prescribed tangent vector. In the Euclidean setting, this is the principle behind approximating the time derivative of a trajectory using finite differences. That is why retraction maps have been widely used to construct numerical integrators of ordinary differential equations on arbitrary manifolds.

In \cite{21MBLDMdD}, the classical notion of retraction map is extended to the notion of discretization maps which are used to construct geometric integrators. Indeed, using the geometry of the tangent and cotangent bundles, the authors show not only how to construct well-known symplectic integrators from a choice of a discretization map, but also how to obtain new ones by choosing more sophisticated maps. Such maps were further applied in \cite{DM} to construct numerical methods for optimal control problems from a Hamiltonian perspective. 


The goal of this paper is to use the notion of discretization map given in~\cite{21MBLDMdD} to construct symplectic integrators for the PMP equations satisfied by a solution of an optimal control problem of a nonholonomic mechanical systems, and to study the performance of symplectic versus non-symplectic integrators for the optimal control of nonholonomic control dynamical systems (see \cite{B, BlCoGuMdD, BlochCrouch2}). In particular, we compare their performance in preserving the Hamiltonian function and the nonholonomic constraints in a specific example. In future research, we will show how to extend these methods to optimal control problems of nonholonomic systems on Lie groups and homogeneous spaces.

The paper is structured as follows: in Section II we recall some preliminaries from differential geometry. In Section III, we recall the basic concepts about nonholonomic systems and we recall the concept of adapted coordinates to a distribution that will be useful to compute retraction maps tailored to nonholonomic control systems. In Section IV, we state the optimal control of nonholonomic mechanical systems and rewrite it in terms of adapted coordinates. In Section \ref{sec:retractions:D}, we define a retraction map on the distribution $\mathcal{D}$ determining the nonholonomic constraints. Finally, in Section \ref{sec:example}, we examine the example of the nonholonomic robot in detail, compute explicit expressions for the retraction map and the associated integrator, whose performance is subsequently discussed.


\section{Preliminaries}\label{sec2}

Let $Q$ be a $n$-dimensional differentiable configuration
manifold of a mechanical system with local coordinates $(q^i)$, $1\leq i\leq n$. Denote by $TQ$ the
tangent bundle (see, for instance, \cite{marsden} for an introduction to the tangent bundle and mechanics on it). If $T_{q}Q$ denotes the tangent space of $Q$ at the point $q$, then $\displaystyle{TQ:=\cup_{q\in Q}T_{q}Q}$, with induced local coordinates $(q^i, \dot{q}^i)$.  There is a canonical projection $\tau_{Q}:TQ \rightarrow Q$, sending each vector $v_{q}$ to the corresponding base point $q$ as follows $\tau_{Q}(q^{i},\dot{q}^{i})=(q^{i})$. 

The vector space structure of $T_{q}Q$ makes possible to consider its dual space, $T^{*}_{q}Q$, to define the cotangent bundle as $\displaystyle{T^{*}Q:=\cup_{q\in Q}T^{*}_{q}Q},$ with local coordinates $(q^i,p_i)$.  There is a canonical projection $\pi_{Q}:T^{*}Q \rightarrow Q$, sending each momenta $p_{q}$ to the corresponding base point $q$. Note that in coordinates $\pi_{Q}(q^{i},p_{i})=(q^{i})$.



\subsection{Geometry of Hamiltonian systems}

A Hamiltonian function $H:T^{*}Q\to\mathbb{R}$ is described by the total energy of a mechanical system and leads to Hamilton's equations on $T^{*}Q$, whose solutions are integral curves of the Hamiltonian vector field $X_H$ taking values in $T(T^{*}Q)$ associated with $H$. 
Locally, $X_H(q,p)=\left(\frac{\partial H}{\partial p},-\frac{\partial H}{\partial q}\right)$, that is,
\begin{equation}\label{hameq1}\dot{q}^{i}=\frac{\partial H}{\partial p_i},\quad\dot{p}_{i}=-\frac{\partial H}{\partial q^i},\quad 1\leq i\leq n.\end{equation} Equations~\eqref{hameq1} determine a set of $2n$ first order ordinary differential equations (see \cite{B}, for instance, for more details).

A one-form $\alpha$ on $Q$ is a map assigning to each point $q$ a cotangent vector on $q$, that is, $\alpha(q)\in T^{*}_qQ$. Cotangent vectors acts linearly on vector fields according to $\alpha(X) = \alpha_{i}X^{i}\in \mathbb{R}$ if $\alpha = \alpha_{i}dq^{i}$ and $X = X^{i} \frac{\partial}{\partial q^{i}}$. Analogously, a two-form or a $(0,2)$-tensor field is a bilinear map that acts on a pair of vectors  to produce a number. 

A symplectic form $\omega$ on a manifold $Q$ is a $(0,2)$-type tensor field that is skew-symmetric and non-degenerate, i.e., $\omega(X,Y)=-\omega(Y,X)$ for all vector fields $X$ and $Y$ and if $\omega(X,Y)=0$ for all vector fields $X$, then $Y\equiv 0$. 

The set of vector fields and the set of 1-forms on $Q$ are denoted by $\mathfrak{X}(Q)$ and $\Omega^{1}(Q)$, respectively. The symplectic form induces a linear isomorphism $\flat_{\omega}:\mathfrak{X}(Q)\rightarrow \Omega^{1}(Q)$, given by $\langle\flat_{\omega}(X),Y\rangle=\omega(X,Y)$ for any vector fields $X, Y$. The inverse of $\flat_{\omega}$ will be denoted by $\sharp_{\omega}$.

As described in~\cite{LiMarle}, the cotangent bundle $T^*Q$ of a  differentiable manifold $Q$ is equipped with a canonical exact symplectic structure $\omega_Q=-d\theta_Q$, where $\theta_Q$ is the canonical 1-form on $T^*Q$. In canonical bundle coordinates $(q^i, p_i)$ on $T^*Q$, 
$
\theta_Q= p_i\, \mathrm{d}q^i$ and
$\omega_Q= \mathrm{d}q^i\wedge \mathrm{d}p_i\; .
$
Hamilton's equations can be intrinsically rewritten as $
\imath_{X_H}\omega_Q\colon = \flat_\omega(X_H)=\mathrm{d}H\; .
$
Hamiltonian dynamics are characterized by the following two essential properties~\cite{hairer}:
\begin{itemize}
	\item  Preservation of energy by the Hamiltonian function:  $$0=\omega_Q(X_H, X_H)=dH(X_H)=X_H(H)\,.$$
	
	\item Preservation of the symplectic form: 
 If $\{\phi^t_{X_H}\}$ is the flow of $X_H$, then the pull-back of the differential form by the flow is preserved, $(\phi^t_{X_H})^*\omega_Q=\omega_Q$.

\end{itemize}

 Recall that a
pair $(Q,\omega)$ is called a symplectic manifold if $Q$ is a
differentiable manifold and  $\omega$ is a symplectic 2-form. As a consequence, the restrictions of $\omega$ to each $q\in Q$ makes the tangent space $T_{q}Q$ into a symplectic vector space.

\begin{definition}
Let $(Q_1,\omega_1)$ and
$(Q_2,\omega_{2})$ be two symplectic manifolds, let $\phi:Q_1\to Q_2$ be a smooth map. The
map $\phi$ is called symplectic if the symplectic forms are preserved:
$
\phi^*\omega_2=\omega_1$. Moreover, it is a \textit{symplectomorphism} if $\phi$ is a diffeomorphism and $\phi^{-1}$ is also
symplectic.
\end{definition}

		Let $Q_1$ and $Q_2$ be $n$-dimensional manifolds and $F: Q_1\rightarrow Q_2$ be a smooth map. The
	{\it tangent lift} $TF: TQ_1\rightarrow TQ_2$  of $F$ is defined by $TF(v_q)=T_qF (v_q)\in T_{F(q)} Q_2\, \, \mbox{ where } v_q\in T_qQ_1$, and $T_qF$ is the tangent map of $F$ whose matrix is the Jacobian matrix of $F$ at $q\in Q_1$.

		As the tangent map $T_qF$ is linear, the dual map $T_{q}^*F\colon T^*_{F(q)}Q_2\rightarrow T^*_qQ_1$ is defined as follows:
  \[\langle(T^*_{q}F)(\alpha_2), v_{q}\rangle=\langle \alpha_2, T_{q}F(v_{q})\rangle\mbox{ for every } v_q\in T_qQ_1.\]
  Note that $(T^*_{q}F)(\alpha_2)\in T^*_qQ_1$. 
  
 \begin{definition}\label{def:colift}
Let $F: Q_1\rightarrow Q_2$ be a diffeomorphism. The vector bundle morphism 
	$\widehat{F}: T^*Q_1\rightarrow T^*Q_2$ defined by	$\widehat{F}=T^*F^{-1}$
	is called the cotangent lift of $F^{-1}$. 
	 \end{definition}
In other words, $\widehat{F}(\alpha_q)= 	T^*_{F(q)}F^{-1} (\alpha_q)$ where $\alpha_q\in T^*_q Q_1$. Obviously, $(T^*F^{-1})\circ (T^*F)={\rm Id}_{T^*Q_2}$.

\subsection{Discretization maps}\label{sec4}

The first notion of retraction map in the literature can be found in~\cite{1931Borsuk} from a topological viewpoint. Later on, the notion of retraction map as defined below is used to obtain Newton's method on Riemannian manifolds~\cite{1986Shub,2002Adler}.

\begin{definition}\label{def-RetractMap} A \textit{retraction map} on a manifold $Q$ is a smooth mapping $R$ from the tangent bundle $TQ$ onto $Q$. Let $R_q$ denote the restriction of $R$ to $T_qQ$, the following properties are satisfied:
\begin{enumerate}
	\item $R_q(0_q)=q$, where $0_q$ denotes the zero element of the vector space $T_qQ$.
	\item With the canonical identification $T_{0_q}T_qQ\simeq T_qQ$, $R_q$ satisfies \begin{equation}\label{eq-DefRetract-prop}
	{\rm D}R_q(0_q)=T_{0_q}R_q={\rm Id}_{T_qQ},
	\end{equation}
	where ${\rm Id}_{T_qQ}$ denotes the identity mapping on $T_qQ$.
\end{enumerate}
\end{definition}

The condition~\eqref{eq-DefRetract-prop} is known as \textit{local rigidity condition} since, given $\xi\in T_qQ$,  the curve $\gamma_\xi(t)=R_q(t\xi)$ has $\xi$ as tangent vector at $q$, i.e. $\dot{\gamma}_\xi(t)= \langle DR_q(t\xi), \xi\rangle\; \hbox{ and, in consequence}, \dot{\gamma}_\xi(0)= {\rm Id}_{T_qQ}(\xi)=\xi$.

A typical example of a retraction map is the exponential map, $\hbox{exp}$, on Riemannian manifolds given in~\cite[Chapter 3.2]{doCarmo}. Therefore, the image of $\xi$ through the exponential map is a point on the Riemannian manifold $(Q,\mathcal{G})$, where $\mathcal{G}$ is a Riemannian metric, obtained by moving along a geodesic a length equal to the norm of $\xi$ starting with the velocity $\xi/\|\xi\|$, that is, 
\[
\hbox{exp}_q(\xi)=\sigma (\|\xi\|)\; ,
\]
where $\sigma$ is the unit speed geodesic such that $\sigma(0)=q$ and $\dot{\sigma}(0)=\xi/\|\xi\|$. 

Next, we define a generalization of the retraction map in Definition~\ref{def-RetractMap} that allows a discretization of the tangent bundle of the configuration manifold leading to the construction of numerical integrators as described in~\cite{21MBLDMdD}. Given a point and a velocity, we obtain two nearby points that are not necessarily equal to the initial base point. 

\begin{definition} \label{def:DiscreteMap2} A map 
	$R_d\colon U\subset TQ\rightarrow Q\times Q$ given by \begin{equation*}
	R_d(q,v)=(R^1(q,v),R^2(q,v)),
	\end{equation*} 
	where  $U$ is an open neighborhood of the zero section $0_q$ of $TQ$, 
	defines a {\it discretization map on $Q$} if it satisfies 
	\begin{enumerate}
		\item $R_d(q,0)=(q,q)$,
		\item $T_{0_q}R^2_q-T_{0_q}R^1_q\colon T_{0_q}T_qQ\simeq T_qQ\rightarrow T_qQ$ is equal to the identity map on $T_qQ$ for any $q$ in $Q$, where $R^a_q$ denotes the restrictions of $R^a$ to $T_qQ$ for $a=1,2$.
	\end{enumerate}
\end{definition}
Thus, the discretization map $R_d$ is a local diffeomorphism from some neighborhood of the zero section of $TQ$.

If $R^1(q,v)=q$, the two properties in Definition~\ref{def:DiscreteMap2} guarantee that both properties in Definition~\ref{def-RetractMap} are satisfied by $R^2$. Thus, Definition~\ref{def:DiscreteMap2} generalizes Definition~\ref{def-RetractMap}. 

		

\begin{example} The midpoint rule on an Euclidean vector space can be recovered from the following discretization map:
$R_d(q,v)=\left( q-\tfrac{v}{2}, q+\tfrac{v}{2}\right).$

\end{example}


\subsection{Cotangent lift of discretization maps}

As the Hamiltonian vector field takes values on $TT^*Q$, the discretization map must be on $T^*Q$, that is, 
$R^{T^*}_d: TT^*Q \rightarrow T^*Q\times T^*Q$. Such a map is obtained by cotangently lifting a discretization map $R_d\colon TQ\rightarrow Q\times Q$, so that the construction $R^{T^*}_d$ is a symplectomorphism. In order to do that, we need the following three symplectomorphisms (see \cite{21MBLDMdD} and \cite{DM} for more details): 
\begin{itemize}
\item The cotangent lift of the diffeomorphism $R_d\colon TQ\rightarrow Q\times Q$ as described in Definition~\ref{def:colift}. 
	\item The canonical symplectomorphism:
	\begin{equation*} \alpha_Q\colon TT^*Q  \longrightarrow  T^*TQ  
	\end{equation*}  \mbox{ such that } $\alpha_Q(q,p,\dot{q},\dot{p})= (q,\dot{q},\dot{p}, p).$

	\item  The symplectomorphism between $(T^*(Q\times Q), \omega_{Q\times Q})$     and   
	$(T^*Q\times T^*Q, \Omega_{12}:=pr_2^*\omega_Q-pr^*_1\omega_Q)$:
		\begin{equation*}
	\Phi:T^*Q\times T^*Q \longrightarrow T^*(Q\times Q)\; , \; 
		\end{equation*} given by $\Phi(q_0, p_0; q_1, p_1)=(q_0, q_1, -p_0, p_1).$ Here, $pr_{i}:T^{*}Q\times T^{*}Q \to T^{*}Q$ is the projection to the $i$-th factor.
	\end{itemize}
Diagram in Fig.~\ref{Diag:RdT*} summarizes the construction procress from $R_d$ to $R_d^{T^*}$:
	\begin{figure} [htb!]
 \begin{equation*}
\xymatrix{ {{TT^*Q }} \ar[rr]^{{{R_d^{T^*}}}}\ar[d]_{\alpha_{Q}} && {{T^*Q\times T^*Q }}  \\ T^*TQ \ar[d]^{\pi_{TQ}}\ar[rr]^{	\widehat{R_d}}&& T^*(Q\times Q)\ar[u]_{\Phi^{-1}}\ar[d]^{\pi_{Q\times Q}}\\ TQ \ar[rr]^{R_d} && Q\times Q }
\end{equation*}
    \caption{Definition of the cotangent lift of a discretization.}\label{Diag:RdT*}
\end{figure}

\begin{proposition}\cite{21MBLDMdD}
	Let $R_d\colon TQ\rightarrow Q\times Q$ be a discretization map on $Q$. Then $${{R_d^{T^*}=\Phi^{-1}\circ \widehat{R_d}\circ \alpha_Q\colon TT^*Q\rightarrow T^*Q\times T^*Q}}$$ 
is a discretization map  on $T^*Q$.\label{Prop:RdT*}
\end{proposition}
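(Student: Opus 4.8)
The plan is to verify directly the two defining conditions of Definition~\ref{def:DiscreteMap2}, now with $T^*Q$ playing the role of the base manifold and $TT^*Q$ its tangent bundle. The symplectomorphism property that motivates the construction comes for free: each of the three factors $\alpha_Q$, $\widehat{R_d}$ and $\Phi^{-1}$ is a symplectomorphism, so $R_d^{T^*}$ is a symplectomorphism and in particular a local diffeomorphism. Hence only the normalization of $R_d^{T^*}$ along the zero section of $TT^*Q$ remains to be checked, and I would do this in canonical bundle coordinates.

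First I would write the composite explicitly. Starting from $(q,p,\dot q,\dot p)\in TT^*Q$, the map $\alpha_Q$ produces the covector over $(q,\dot q)\in TQ$ with fibre components $(\dot p,p)$; the cotangent lift $\widehat{R_d}=T^*R_d^{-1}$ then moves the base point to $(q_0,q_1)=(R^1(q,\dot q),R^2(q,\dot q))$ and transforms the fibre by $(J^{-1})^{T}$, where $J=\mathrm{D}R_d$; finally $\Phi^{-1}$ yields the pair $\big((q_0,-\bar p_0),(q_1,\bar p_1)\big)$ with $(\bar p_0,\bar p_1)^{T}=(J^{-1})^{T}(\dot p,p)^{T}$. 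Writing $B^a:=T_{0_q}R^a_q$, Condition~1 is then a short evaluation on the zero section $\dot q=\dot p=0$: since $R^a(q,0)=q$ we get $q_0=q_1=q$, and since $B^2-B^1=\mathrm{Id}$ the block inverse $(J^{-1})^{T}|_0$ sends $(0,p)^{T}$ to $(-p,p)^{T}$, so $\Phi^{-1}$ returns $\big((q,p),(q,p)\big)$, that is $R_d^{T^*}(z,0)=(z,z)$ for $z=(q,p)$.

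Condition~2 is the heart of the matter. I would differentiate the two components of $R_d^{T^*}$ with respect to the fibre variables $(\dot q,\dot p)$ at the zero section and subtract. Under the identification $T_{0_z}T_z T^*Q\simeq T_z T^*Q$, the $\delta\dot p$ block of the momentum output gives $\mathrm{Id}$ at once, the $\delta\dot p$ block of the base output vanishes, and the $\delta\dot q$ block of the base output gives $B^2-B^1=\mathrm{Id}$. The single dangerous contribution is the cross term coupling $\delta\dot q$ into the momentum output: letting $\nu$ denote the $TQ$-velocity component of $R_d^{-1}$, this term is proportional to $\partial_{\dot q}\big[(\partial_{q_0}+\partial_{q_1})\nu\big]$ evaluated on the diagonal. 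The main obstacle is that, a priori, this is a genuine second derivative of $R_d$ and there is no reason for it to cancel.

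The resolution, and the step I expect to require the most care, is to exploit that $\nu$ vanishes on the diagonal $\{q_0=q_1\}$, which lets me factor $\nu=W\,(q_1-q_0)$ with $W$ a smooth matrix-valued map whose restriction to the diagonal equals the transverse first derivative of $\nu$, namely $(B^2-B^1)^{-1}=\mathrm{Id}$. Because Condition~2 of $R_d$ holds \emph{at every base point}, this value is constant along the diagonal, so its derivative in the diagonal direction $(\partial_{q_0}+\partial_{q_1})$ is zero; combined with the factor $(q_1-q_0)$ this forces $\partial_{\dot q}\big[(\partial_{q_0}+\partial_{q_1})\nu\big]$ to vanish on the zero section. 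The cross term therefore disappears and the difference of the tangent maps reduces to the identity, i.e.\ $T_{0_z}R^{T^*,2}_z-T_{0_z}R^{T^*,1}_z=\mathrm{Id}_{T_z T^*Q}$, so $R_d^{T^*}$ is a discretization map on $T^*Q$. As a sanity check one can confirm that for the midpoint map $R_d(q,v)=(q-\tfrac v2,q+\tfrac v2)$ the factor $W$ is the constant $\mathrm{Id}$, making the cross term manifestly zero.
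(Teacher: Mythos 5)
The first thing to note is that the paper itself contains no proof of this proposition: it is stated with a citation to \cite{21MBLDMdD}, so there is no internal argument to compare yours against, and your proposal has to be judged on its own merits. Judged so, it is correct, and it follows the natural coordinate route. Your explicit form of the composite (base points $(R^1(q,\dot q),R^2(q,\dot q))$, fibre transformed by $(J^{-1})^T$, sign flip from $\Phi^{-1}$) is right and reproduces Example~\ref{example3} for the midpoint map; the block-inverse evaluation on the zero section, which uses $B^2-B^1=\mathrm{Id}$ to get the bottom row $(-\mathrm{Id},\mathrm{Id})$ of $J^{-1}\big|_0$, correctly yields Condition~1; and you have isolated the only genuinely nontrivial point, namely the cross term feeding $\delta\dot q$ into the momentum output, whose coefficient is $\partial_{\dot q}\big[\big((\partial_{q_0}+\partial_{q_1})\nu\big)\circ R_d\big]$ at $\dot q=0$ contracted with $p$. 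Your resolution also identifies the right mechanism: because Condition~2 of $R_d$ holds at \emph{every} base point, the transverse derivative $\partial_{q_1}\nu$ restricted to the diagonal is the \emph{constant} $(B^2-B^1)^{-1}=\mathrm{Id}$, and it is the vanishing of the diagonal derivative of that quantity which kills the cross term. Two refinements would tighten the write-up. First, the Hadamard factorization $\nu=W\,(q_1-q_0)$ can be dispensed with: from $\nu(q,q)=0$ and $\partial_{q_1}\nu(q,q)=\mathrm{Id}$ for all $q$, differentiating the latter identity along the diagonal and using equality of mixed partials gives $\partial_{q_1}\big[(\partial_{q_0}+\partial_{q_1})\nu\big](q,q)=0$ directly. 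Second, be explicit that $\partial_{\dot q}$ pushes forward under $R_d$ to the pair of directions $(B^1,B^2)$, which is \emph{not} tangent to the diagonal; writing $(B^1,B^2)=(B^1,B^1)+(0,B^2-B^1)$, the diagonal part annihilates $G:=(\partial_{q_0}+\partial_{q_1})\nu$ (since $G$ vanishes on the diagonal), and the remainder contributes exactly $\partial_{q_1}G(q,q)$, which is zero by the identity above. Your $W$-factorization does produce these same cancellations when expanded, so this is a presentational gap rather than a mathematical one.
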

\begin{corollary}\cite{21MBLDMdD} The discretization map
 ${{R_d^{T^*}}}=\Phi^{-1}\circ 	(TR_d^{-1})^* \circ \alpha_Q\colon T(T^*Q)\rightarrow T^*Q\times T^*Q$ is a symplectomorphism between $(T(T^*Q), {\rm d}_T \omega_Q)$ and $(T^*Q\times T^*Q, \Omega_{12})$.
\end{corollary}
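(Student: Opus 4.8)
The plan is to read off from Fig.~\ref{Diag:RdT*} that $R_d^{T^*}=\Phi^{-1}\circ\widehat{R_d}\circ\alpha_Q$ is a composition of three maps and to show that each factor is a symplectomorphism for the relevant pair of symplectic structures; since a composition of symplectomorphisms is again a symplectomorphism, the assertion follows. Writing $\omega_{TQ}$ and $\omega_{Q\times Q}$ for the canonical symplectic forms on $T^*TQ$ and $T^*(Q\times Q)$, the whole statement reduces to the identity
\begin{equation*}
\begin{aligned}
(R_d^{T^*})^*\Omega_{12}&=\alpha_Q^*\,\widehat{R_d}^*\,(\Phi^{-1})^*\Omega_{12}\\
&=\alpha_Q^*\,\widehat{R_d}^*\,\omega_{Q\times Q}=\alpha_Q^*\,\omega_{TQ}=\mathrm{d}_T\omega_Q,
\end{aligned}
\end{equation*}
each equality of which I would justify by the symplectomorphism property of one of the three factors.

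I would dispose of the two outer factors first, as they are the most direct. The middle map $\widehat{R_d}=(TR_d^{-1})^*=T^*R_d^{-1}$ is, by Definition~\ref{def:colift}, the cotangent lift of the diffeomorphism $R_d$; since the cotangent lift of any diffeomorphism pulls back the Liouville $1$-form to the Liouville $1$-form, it preserves the canonical symplectic forms, giving $\widehat{R_d}^*\omega_{Q\times Q}=\omega_{TQ}$. For $\Phi$ one computes directly in the stated coordinates: with the canonical form $\mathrm{d}q_0\wedge\mathrm{d}P_0+\mathrm{d}q_1\wedge\mathrm{d}P_1$ on $T^*(Q\times Q)$ and $\Phi(q_0,p_0;q_1,p_1)=(q_0,q_1,-p_0,p_1)$, the pullback is $-\mathrm{d}q_0\wedge\mathrm{d}p_0+\mathrm{d}q_1\wedge\mathrm{d}p_1=pr_2^*\omega_Q-pr_1^*\omega_Q=\Omega_{12}$, so $\Phi$, hence $\Phi^{-1}$, is a symplectomorphism and $(\Phi^{-1})^*\Omega_{12}=\omega_{Q\times Q}$.

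The main obstacle is the remaining factor $\alpha_Q$, where one must establish $\alpha_Q^*\omega_{TQ}=\mathrm{d}_T\omega_Q$; that is, the canonical diffeomorphism $\alpha_Q\colon TT^*Q\to T^*TQ$ intertwines the complete (tangent) lift $\mathrm{d}_T\omega_Q$ of the canonical form with the canonical form on $T^*TQ$. I would verify this in canonical coordinates: starting from $\omega_Q=\mathrm{d}q^i\wedge\mathrm{d}p_i$, its complete lift to $TT^*Q$ with coordinates $(q^i,p_i,\dot q^i,\dot p_i)$ is $\mathrm{d}_T\omega_Q=\mathrm{d}\dot q^i\wedge\mathrm{d}p_i+\mathrm{d}q^i\wedge\mathrm{d}\dot p_i$, while the canonical form on $T^*TQ$, in coordinates $(q^i,\dot q^i;a_i,b_i)$ conjugate to $(q^i,\dot q^i)$, is $\mathrm{d}q^i\wedge\mathrm{d}a_i+\mathrm{d}\dot q^i\wedge\mathrm{d}b_i$; since $\alpha_Q(q,p,\dot q,\dot p)=(q,\dot q,\dot p,p)$ sets $a_i=\dot p_i$ and $b_i=p_i$, the pullback yields exactly $\mathrm{d}q^i\wedge\mathrm{d}\dot p_i+\mathrm{d}\dot q^i\wedge\mathrm{d}p_i=\mathrm{d}_T\omega_Q$. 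The only genuine subtlety is the correct coordinate expression for the complete lift of a two-form, which is where the definition of $\mathrm{d}_T$ must be invoked carefully; alternatively this identity can be quoted directly from \cite{21MBLDMdD}. Substituting the three pullbacks into the displayed identity gives $(R_d^{T^*})^*\Omega_{12}=\mathrm{d}_T\omega_Q$, as claimed.
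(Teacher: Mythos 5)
Your proposal is correct, and it follows essentially the route the paper intends: the paper gives no proof of its own (the corollary is quoted from \cite{21MBLDMdD}), but the text preceding Fig.~\ref{Diag:RdT*} introduces precisely the three maps you use and asserts they are symplectomorphisms, so the corollary is exactly the composition argument you spell out. Your coordinate verifications fill in the details the paper omits, and in particular the only delicate step, the Tulczyjew-type identity $\alpha_Q^*\omega_{TQ}=\mathrm{d}_T\omega_Q$, is computed correctly (with the right convention $\mathrm{d}_T\omega_Q=\mathrm{d}\dot q^i\wedge\mathrm{d}p_i+\mathrm{d}q^i\wedge\mathrm{d}\dot p_i$), as are the pullback identities for $\widehat{R_d}$ and $\Phi$.
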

\begin{example}\label{example3} On $Q={\mathbb R}^n$ the discretization map 
	$R_d(q,v)=\left(q-\frac{1}{2}v, q+\frac{1}{2}v\right)$ is cotangently lifted to
		$$R_d^{T^*}(q,p,\dot{q},\dot{p})=\left( q-\dfrac{1}{2}\,\dot{q}, p-\dfrac{\dot{p}}{2}; \; q+\dfrac{1}{2}\, \dot{q}, p+\dfrac{\dot{p}}{2}\right)\, .$$
\end{example}

\section{Description of Nonholonomic Dynamics}

Throughout the paper, we will consider the case of nonholonomic mechanical systems where the Lagrangian is of mechanical type, that is, the associated Lagrangian function $L:TQ\to\R$ is of the form \[
L(v_q)=\frac{1}{2}\mathcal{G}(v_q, v_q) - V(q),
\]
with $v_q\in T_qQ$, where $\mathcal{G}$ denotes a Riemannian metric on the configuration
space $Q$, $V:Q\ra\R$ is a potential function, and the velocities are restricted to belong to a non-integrable regular distribution $\mathcal{D}$ on $Q$.
Locally, the metric is determined by the matrix $M=(\mathcal{G}_{ij})_{1\leq i, j\leq n}$
where $\mathcal{G}_{ij}=\mathcal{G}(\partial/\partial q^i,\partial/\partial q^j)$.

Next, assume that the system is subject to nonholonomic constraints, defined by a regular distribution $\mathcal{D}$ on $Q$ with corank$(\mathcal{D})=m$.
Denote by $\tau_{\mathcal{D}}:\mathcal{D}\ra Q$ the canonical
projection of $\mathcal{D}$ onto $Q$ and by 
$\Gamma(\tau_{\mathcal{D}})$ the set of sections of $\tau_{D}$, which
in this case is just the set of vector fields $\mathfrak{X}(Q)$
taking values on $\mathcal{D}.$ If $X, Y\in\mathfrak{X}(Q),$ then
$[X,Y]$ denotes the standard Lie bracket of vector fields.

For $X,Y\in\Gamma(\tau_{\mathcal{D}})$, it may happen that $[X,Y]\notin\Gamma(\tau_{\mathcal{D}})$
because $\mathcal{D}$ is non-integrable. If so, the space of sections of $\mathcal{D}$ is not closed under the usual Lie bracket. However, we can modify the Lie bracket of vector fields to obtain a bracket on sections of $\mathcal{D}$. Using the Riemannian metric $\mathcal{G}$, we can define two complementary orthogonal projectors ${\mathcal P}\colon TQ\to {\mathcal D}$ and ${\mathcal Q}\colon TQ\to {\mathcal
D}^{\perp},$ with respect to the orthogonal decomposition $\mathcal{D}\oplus\mathcal{D}^{\perp}=TQ$. Therefore, given $X,Y\in\Gamma(\tau_{\mathcal{D}})$, the
\textit{nonholonomic bracket} is defined by
$\lcf\cdot,\cdot\rcf:\Gamma(\tau_{\mathcal{D}})\times\Gamma(\tau_{\mathcal{D}})\rightarrow\Gamma(\tau_{\mathcal{D}})$
as
$$\lcf X,Y\rcf:=\mathcal{P}[X,Y].$$ This Lie bracket verifies the usual properties of a Lie bracket except the Jacobi identity (see \cite{Grab,BlCoGuMdD} and references therein). 

A curve $\gamma:I\subset\R\ra\mathcal{D}$ is \textit{admissible} if
$$\frac{d\sigma}{dt}(t)=\gamma(t),$$ where $\tau_{\mathcal{D}}\circ\gamma=\sigma$.

Given local coordinates on $Q,$ $(q^{i})$ with $i=1,\ldots,n;$ and
a local basis of sections of $\mathcal{D}$ denoted by $\{e_{A}\}$, with $A=1,\ldots,n-m$, such that
$\displaystyle{e_{A}=\rho_{A}^{i}(q)\frac{\partial}{\partial
q^{i}}}$ we introduce \textit{adapted coordinates} $(q^{i},y^{A})$ on
$\mathcal{D}$, where, if $e\in\mathcal{D}_{x}$ then
$e=y^{A}e_{A}(x).$ Therefore, $\gamma(t)=(q^{i}(t),y^{A}(t))$ is
admissible if
$$\dot{q}^{i}(t)=\rho_{A}^{i}(q(t))y^{A}(t).$$ In addition, we can express the modified Lie bracket as a linear combination of the local sections $e_{C}$, i.e., $\lcf e_A,e_B\rcf=c_{AB}^{C}e_{C}$ where the coefficients $c_{AB}^{C}$ are called the structure functions of the Lie bracket.

Consider the restricted Lagrangian function
$\ell:\mathcal{D}\rightarrow\mathbb{R},$
\begin{equation}\label{eq:lrestrict} \ell(v)=\frac{1}{2}\mathcal{G}^{\mathcal{D}}(v,v)-V(\tau_{D}(v)),\hbox{ with }  v\in\mathcal{D},\end{equation}
where $\mathcal{G}^{\mathcal{D}}:\mathcal{D}\times_{Q}\mathcal{D}\ra\R$ is the restriction of the Riemannian metric $\mathcal{G}$ to
the distribution $\mathcal{D}$.

\begin{definition}[\cite{BlCoGuMdD}]
A \textit{solution of the nonholonomic problem} determined by the Lagrangian function $\ell$ and the distribution $\mathcal{D}$ is an admissible
curve $\gamma:I\rightarrow\mathcal{D}$ with local coordinates $(q^{i}(t), y^{A}(t))$ satisfying the equations
\begin{equation}\label{eq:lrestricted}\begin{array}{rcl}
\dot{q}^{i}&=&\rho_{A}^{i}y^{A},\\ [1mm]
\dfrac{d}{dt}\dfrac{\partial \ell}{\partial y^{A}}&=&c_{BA}^{C}\dfrac{\partial \ell}{\partial y^{C}}y^{B} + \rho^{i}_{A} \dfrac{\partial \ell}{\partial q^{i}}.
\end{array}
\end{equation}

\end{definition}

\begin{remark}
The nonholonomic  equations only depend on the coordinates
$(q^{i},y^{A})$ on $\mathcal{D}$.  Therefore, the nonholonomic
equations are free of Lagrange multipliers. These equations are the  \textit{nonholonomic Hamel equations} (see
\cite{BloZen}, for example, and references therein).
\end{remark}

For the mechanical Lagrangian function $\ell$ above, Hamel equations end up having the following form
\begin{eqnarray*}
\dot{q}^{i}&=&\rho_{A}^{i}(q)y^{A},\\
\dot{y}^{C}&=&-\Gamma_{AB}^{C}y^{A}y^{B}-(\mathcal{G}^{\mathcal{D}})^{CB}\rho_{B}^{i}\frac{\partial V}{\partial q^{i}},
\end{eqnarray*}
where $(\mathcal{G}^{\mathcal{D}})^{AB}$ denotes the coefficients of the inverse matrix of $(\mathcal{G}^{\mathcal{D}})_{AB}$, with $\mathcal{G}^{\mathcal{D}}(e_{A},e_{B})=(\mathcal{G}^{\mathcal{D}})_{AB}$, and $\Gamma_{AB}^{C}$ are functions of the variables $q^{i}$ aggregating the quadratic terms in $y^{A}$. In fact, $\Gamma_{AB}^{C}$ are the Christoffel symbols associated with a Levi-Civita connection induced by the fibered metric $\mathcal{G}^{\mathcal{D}}$ (see \cite{BlCoGuMdD} for more details).

\section{Optimal control of nonholonomic mechanical
systems}\label{section3}

The purpose of this section is to study optimal control problems for nonholonomic mechanical systems. We shall assume that all the control systems under consideration are controllable in the configuration space, that is, for any two
points $q_0$ and $q_f$ in the configuration space $Q$, there exists
an admissible control $u(t)$ defined on the control manifold
$U\subseteq\R^{n}$ such that the system with initial condition $q_0$
reaches the point $q_f$ at time $T$ (see \cite{B} for
more details).
We analyze the case when the dimension of the input or control distribution is equal to the rank of $\mathcal{D}$. If the rank of $\mathcal{D}$ is equal to the dimension of the control distribution, the system will be called a \textit{fully actuated nonholonomic system}.

\begin{definition}
A \textit{solution of a fully actuated nonholonomic problem} is an
admissible curve $\gamma:I\rightarrow\mathcal{D}$ such that it satisfies: 
$$\frac{d}{dt}\frac{\partial \ell}{\partial y^{A}}=c_{BA}^{C}\frac{\partial \ell}{\partial y^{C}}y^{B} + \rho^{i}_{A} \frac{\partial \ell}{\partial q^{i}} + u^{A}
.$$
where $u^{A}$ are the control inputs.
\end{definition}

For the particular Lagrangian function $\ell$ in Equation~\eqref{eq:lrestrict} the above equations become
\begin{eqnarray*}
\dot{q}^{i}&=&\rho_{A}^{i}y^{A},\\
\dot{y}^{C}&=&-\Gamma_{AB}^{C}y^{A}y^{B}-(\mathcal{G}^{\mathcal{D}})^{CB}\rho_{B}^{i}\frac{\partial V}{\partial q^{i}}+u^{C}.
\end{eqnarray*}

For a cost function
\begin{eqnarray*}
C&:&\mathcal{D}\times U\rightarrow\mathbb{R}\\
&&(q^{i},y^{A}, u^A)\mapsto C(q^{i},y^{A},u^{A})
\end{eqnarray*} the \textit{optimal control problem} consists of finding an admissible curve
$\gamma:I\rightarrow\mathcal{D}$ which is a solution of the fully actuated
nonholonomic problem given initial and final boundary conditions on
$\mathcal{D}$ and minimizing the cost functional
$$\mathcal{J}(\gamma(t),u(t)):=\int_{0}^{T}C(\gamma(t),u(t))dt.$$

Define the submanifold $\mathcal{D}^{(2)}$ of $T\mathcal{D}$ by
\begin{equation*}
\mathcal{D}^{(2)}:=\{v\in T\mathcal{D}\mid v=\dot{\gamma}(0), \gamma:I\rightarrow\mathcal{D} \text{ admissible curve} \}.
\end{equation*}
 We can choose coordinates $(q^{i},y^{A},\dot{y}^{A})$ on
$\mathcal{D}^{(2)}$, where the inclusion on $T\mathcal{D}$,
$i_{\mathcal{D}^{(2)}}:\mathcal{D}^{(2)}\hookrightarrow
T\mathcal{D}$, is given by $i_{\mathcal{D}^{(2)}}(q^{i},y^{A},\dot{y}^{A})=(q^{i},y^{A},\rho_{A}^{i}(q)y^{A},\dot{y}^{A})$. Therefore, $\mathcal{D}^{(2)}$ is locally described by the following constraints on $T\mathcal{D}$ $$\dot{q}^{i}-\rho_{A}^{i}(q)y^{A}=0.$$

Our optimal control problem is alternatively
determined by a function
$\mathcal{L}:\mathcal{D}^{(2)}\rightarrow\mathbb{R}$, where

\begin{small}\begin{equation}\label{lagrangiancontrol}\mathcal{L}(q^{i},y^{A},\dot{y}^{A})=C\left(q^{i},
y^{A},\dot{y}^{A}+\Gamma_{CB}^{A}y^{C}y^{B}+(\mathcal{G}^{\mathcal{D}})^{AB}\rho_{B}^{i}\frac{\partial
V}{\partial q^{i}}\right).
\end{equation}\end{small}

\begin{definition}
The second-order nonholonomic system on 
$\mathcal{D}^{(2)}$ is \textit{regular} if the
matrix
$\displaystyle{\left(\frac{\partial^{2}\mathcal{L}}{\partial\dot{y}^{A}\partial\dot{y}^{B}}\right)}$
is non singular.
\end{definition}

\begin{remark}
Observe that if the Lagrangian $\mathcal{L}:\mathcal{D}^{(2)}\ra\R$
is determined from an optimal control problem and its expression is
given by \eqref{lagrangiancontrol}, then the regularity of the matrix
$\displaystyle{\left(\frac{\partial^{2}\mathcal{L}}{\partial\dot{y}^{A}\partial\dot{y}^{B}}\right)}$
 is equivalent to $$\det\left(\frac{\partial^{2}C}{\partial u^{A}\partial u^{B}}\right)\neq
 0$$ for the cost function $C$.

\end{remark}

Assume that the system is regular. If $\displaystyle{p_{A}=\tfrac{\partial\mathcal{L}}{\partial\dot{y}^{A}}}$,
we can write $\dot{y}^{A}=\dot{y}^{A}(q^{i},y^{A},p_{A})$.  Let $(q^{i},y^{A},p_{i}, p_A)$ be the induced coordinates on $T^{*}\mathcal{D}$, the Hamiltonian function is locally defined by
\begin{equation}\label{Hamiltonian:D}
    \begin{split}
        \mathcal{H}(q^{i},y^{A},p_{i},p_{A})= & p_{A}\dot{y}^{A}(q^{i},y^{A},p_{A})+p_{i}\rho_{A}^{i}(q)y^{A} \\
        & -\mathcal{L}(q^{i},y^{A},\dot{y}^{A}(q^{i},y^{A},p_{A})).
    \end{split}
\end{equation}

Below we will see that the dynamics of the nonholonomic
optimal control problem is determined by the Hamiltonian system
given by the triple
$(T^{*}\mathcal{D},\omega_{\mathcal{D}},\mathcal{H})$, where
$\omega_{\mathcal{D}}$ is the standard symplectic $2$-form on
$T^{*}\mathcal{D}.$ The symplectic Hamiltonian dynamics is
determined by the dynamical equation 
\begin{equation}\label{dynamic}
i_{X_{\mathcal{H}}}\omega_{\mathcal{D}}=d\mathcal{H}.
\end{equation}  Therefore, if we consider the integral curves of $X_{\mathcal{H}},$ which are of the type $t\mapsto(q^{i}(t),y^{A}(t),p_{i}(t),p_{A}(t))$; the solutions of the nonholonomic Hamiltonian system is
specified by the Hamilton's equations on $T^{*}\mathcal{D}$:
\begin{eqnarray*}
\dot{q}^{i}&=&\tfrac{\partial\mathcal{H}}{\partial p_{i}},\qquad\quad \dot{y}^{A}=\tfrac{\partial\mathcal{H}}{\partial p_{A}},\\
\dot{p}_{i}&=&-\tfrac{\partial\mathcal{H}}{\partial q^{i}},\qquad\dot{p}_{A}=-\tfrac{\partial\mathcal{H}}{\partial y^{A}};
\end{eqnarray*} that is,
\begin{equation*}
\begin{array}{l}
\dot{q}^{i}=\rho_{A}^{i}y^{A},\\
\dot{p}_{i}=\dfrac{\partial\mathcal{L}}{\partial q^{i}}(q^{i},y^{A},\dot{y}^{A}(q^{i},y^{A},p_{A}))-p_j\dfrac{\partial\rho_{A}^{j}}{\partial q^{i}}y^{A}-p_A\dfrac{\partial \dot{y}^A}{\partial q^i},\\
\dot{p}_{A}=\dfrac{\partial\mathcal{L}}{\partial y^{A}}(q^{i},y^{A},\dot{y}^{A}(q^{i},y^{A},p_{A}))-p_{j}\rho_{A}^{j}-p_B\dfrac{\partial \dot{y}^B}{\partial y^A}.
\end{array}
\end{equation*}


\section{Geometric integrators for the optimal control of nonholonomic systems}\label{sec:retractions:D}
	
	The framework for the construction of geometric integrators is established by Proposition 5.1 in \cite{21MBLDMdD} which reads:
	
	\begin{proposition}\label{Prop:symplectic:integrator}
		If $R_{d}$ is a discretization map on $Q$ and $H:T^{*}Q \rightarrow \R$ is a Hamiltonian function, then the equation
        \begin{align*}
            &(R_{d}^{T^*})^{-1}(q_{0},p_{0},q_{1},p_{1}) =\\ & \sharp_{\omega_Q} \left( h dH \left[ \tau_{T^{*}Q} \circ (R_{d}^{T^*})^{-1}(q_{0},p_{0},q_{1},p_{1}) \right] \right)
        \end{align*}
        written for the cotangent lift of $R_{d}$ is a symplectic integrator.
	\end{proposition}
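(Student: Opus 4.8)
The plan is to read off from the implicit equation the one-step map it defines, and then to deduce preservation of $\omega_Q$ from two ingredients only: the symplecticity of $R_d^{T^*}$ (the Corollary following Proposition~\ref{Prop:RdT*}) and the Hamiltonian character of $X_H$.

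First I would put the defining equation in transparent form. Since $i_{X_H}\omega_Q=\mathrm{d}H$ means $X_H=\sharp_{\omega_Q}(\mathrm{d}H)$ and $\sharp_{\omega_Q}$ is linear, the right-hand side is exactly $h\,X_H(z)$, where $z:=\tau_{T^*Q}\circ(R_d^{T^*})^{-1}(q_0,p_0,q_1,p_1)$ is the base point of the tangent vector on the left. Hence the equation reads $(R_d^{T^*})^{-1}(q_0,p_0,q_1,p_1)=h\,X_H(z)$, equivalently $(q_0,p_0,q_1,p_1)=R_d^{T^*}\big(h\,X_H(z)\big)$. Writing the section $hX_H\colon T^*Q\to T(T^*Q)$ and setting $\psi_h:=R_d^{T^*}\circ(hX_H)\colon T^*Q\to T^*Q\times T^*Q$ with components $\psi_h^a:=pr_a\circ\psi_h$ for $a=1,2$, eliminating $z$ shows that the scheme is the map $\Phi_h:=\psi_h^2\circ(\psi_h^1)^{-1}$ sending $(q_0,p_0)\mapsto(q_1,p_1)$. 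I would first check this is well defined: by property (1) of Definition~\ref{def:DiscreteMap2} one has $R_d^{T^*}(z,0)=(z,z)$, so at $h=0$ both $\psi_0^a$ are the identity; for $h$ small each $\psi_h^a$ is therefore a local diffeomorphism, $(\psi_h^1)^{-1}$ exists, and $\Phi_h$ is a genuine near-identity map.

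Next I would compute $\psi_h^*\Omega_{12}$. Since $R_d^{T^*}$ is, by the Corollary, a symplectomorphism from $(T(T^*Q),\mathrm{d}_T\omega_Q)$ to $(T^*Q\times T^*Q,\Omega_{12})$, we have $(R_d^{T^*})^*\Omega_{12}=\mathrm{d}_T\omega_Q$, whence $\psi_h^*\Omega_{12}=(hX_H)^*\big((R_d^{T^*})^*\Omega_{12}\big)=(hX_H)^*(\mathrm{d}_T\omega_Q)$. The decisive step is the identity, valid for any vector field $X$ regarded as a section $X\colon M\to TM$, that pulling back the tangent (complete) lift of a form along the section returns its Lie derivative: $X^*(\mathrm{d}_T\omega_Q)=\mathcal{L}_X\omega_Q$. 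Applying this to $hX_H$ and invoking Cartan's formula together with $\mathrm{d}\omega_Q=0$ and $i_{X_H}\omega_Q=\mathrm{d}H$ gives $\mathcal{L}_{hX_H}\omega_Q=h\,\mathrm{d}(i_{X_H}\omega_Q)=h\,\mathrm{d}(\mathrm{d}H)=0$, so $\psi_h^*\Omega_{12}=0$.

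Finally I would unpack $\Omega_{12}=pr_2^*\omega_Q-pr_1^*\omega_Q$. As $pr_a\circ\psi_h=\psi_h^a$, the vanishing above reads $(\psi_h^2)^*\omega_Q-(\psi_h^1)^*\omega_Q=0$, that is $(\psi_h^1)^*\omega_Q=(\psi_h^2)^*\omega_Q$. Pulling back $\omega_Q$ along $\Phi_h=\psi_h^2\circ(\psi_h^1)^{-1}$ and substituting this equality yields $\Phi_h^*\omega_Q=\big((\psi_h^1)^{-1}\big)^*(\psi_h^2)^*\omega_Q=\big((\psi_h^1)^{-1}\big)^*(\psi_h^1)^*\omega_Q=\big(\psi_h^1\circ(\psi_h^1)^{-1}\big)^*\omega_Q=\omega_Q$, which is precisely the assertion that the integrator is symplectic. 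I expect the main obstacle to be the key identity $X^*(\mathrm{d}_T\omega_Q)=\mathcal{L}_X\omega_Q$, and, relatedly, fixing the precise meaning of $\mathrm{d}_T\omega_Q$ as the complete lift; once the scheme is correctly identified as $\Phi_h=\psi_h^2\circ(\psi_h^1)^{-1}$, the rest is formal manipulation of pullbacks.
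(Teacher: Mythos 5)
Your proof is correct, but a caveat first: the paper you were given never proves this statement --- Proposition~\ref{Prop:symplectic:integrator} is imported verbatim from \cite{21MBLDMdD} (Proposition 5.1 there), so the only meaningful comparison is with the argument in that reference. There, symplecticity is obtained through Lagrangian submanifolds: a vector field $X$ on a symplectic manifold $(M,\omega)$ is locally Hamiltonian if and only if its image $X(M)\subset TM$ is a Lagrangian submanifold of $(TM,\mathrm{d}_T\omega)$ (Tulczyjew's characterization); since $R_d^{T^*}$ is a symplectomorphism from $(T(T^*Q),\mathrm{d}_T\omega_Q)$ to $(T^*Q\times T^*Q,\Omega_{12})$, it carries the Lagrangian submanifold $\operatorname{Im}(hX_H)$ to a Lagrangian submanifold of $(T^*Q\times T^*Q,\Omega_{12})$, and a submanifold that is simultaneously Lagrangian for $\Omega_{12}$ and the graph of a map $(q_0,p_0)\mapsto(q_1,p_1)$ is precisely the graph of a symplectic map. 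Your route replaces this machinery by a direct pullback computation, $\psi_h^*\Omega_{12}=(hX_H)^*\bigl(\mathrm{d}_T\omega_Q\bigr)=\mathcal{L}_{hX_H}\omega_Q=0$, followed by unpacking $\Omega_{12}=pr_2^*\omega_Q-pr_1^*\omega_Q$. The two arguments are equivalent in substance: the identity $X^*(\mathrm{d}_T\alpha)=\mathcal{L}_X\alpha$ that you flag as the crux is exactly what proves Tulczyjew's characterization, and it is true --- in Darboux coordinates $\mathrm{d}_T\omega_Q=\mathrm{d}\dot q^i\wedge \mathrm{d}p_i+\mathrm{d}q^i\wedge \mathrm{d}\dot p_i$, and pulling back along the section $(q,p)\mapsto(q,p,X^q,X^p)$ gives $\mathrm{d}X^{q,i}\wedge \mathrm{d}p_i+\mathrm{d}q^i\wedge \mathrm{d}X^p_i=\mathrm{d}\,i_X\omega_Q=\mathcal{L}_X\omega_Q$. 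What your version buys is self-containedness (no Lagrangian-submanifold apparatus) and an explicit treatment of a point usually left implicit: that $\Phi_h=\psi_h^2\circ(\psi_h^1)^{-1}$ is a well-defined near-identity map for small $h$, which you get from $R_d^{T^*}(z,0)=(z,z)$, i.e.\ from $R_d^{T^*}$ being itself a discretization map (Proposition~\ref{Prop:RdT*}). What the reference's version buys is the coordinate-free geometric picture linking discretization maps to generating functions and Lagrangian graphs, which is the viewpoint the original authors exploit to build higher-order and composition methods.
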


	The previous proposition might be adapted to our case since the Hamiltonian function is defined on $T^{*}\mathcal{D}$. As a result, we construct a symplectic integrator for the Hamiltonian version of the optimal control of nonholonomic systems. The only challenge might be to construct a discretization map on $\mathcal{D}$.
    
    It is clear that if one chooses local coordinates $(q^{i},y^{A})$ on $\mathcal{D}$ associated with a choice of local sections $\{e_{A}\}$ of the distribution $\mathcal{D}$, then one can introduce euclidean local retraction maps, such as the midpoint rule, on the domain of the coordinate chart. Given an open set $\hat{U} \subseteq Q$, we define a sufficiently small tubular neighborhood $U \subseteq \tau_{\mathcal{D}}^{-1}(\hat{U})$ of $\mathcal{D}$ containing the zero section. For any $\delta\in [0,1]$, we can define a local retraction map on $U$, denoted by $R_{U}:TU\to U\times U$, whose local expression is given by
    \begin{equation*}
            \begin{split}
                R_{U}(q^{i}, y^{A}, \dot{q}^{i}, \dot{y}^{A})=& (q^{i} -\delta \dot{q}^{i}, y^{A}-\delta  \dot{y}^{A} , \\
                & q^{i} +(1-\delta) \dot{q}^{i}, y^{A}+(1-\delta) \dot{y}^{A} ),
            \end{split}
        \end{equation*}
        and provided that the set $U$ is sufficiently small this map is well-defined.
    
    Alternatively, if we do not want to restrict ourselves to a coordinate chart in $\mathcal{D}$, we can consider the inclusion map $i_{\mathcal{D}}:\mathcal{D}\hookrightarrow TQ$, a Riemannian metric on $Q$ and the associated orthogonal projection $\mathcal{P}:TQ\rightarrow \mathcal{D}$. Then, given a discretization map of the type $R_{d}:TTQ\rightarrow TQ\times TQ$, we can define the map $R_{\mathcal{D}, d}:T\mathcal{D}\rightarrow \mathcal{D}\times \mathcal{D}$ as in Figure~\ref{fig:enter-label}.
    \begin{figure}[htb!]
        \centering
        \begin{tikzcd}
            TTQ \arrow[r, "R_{d}"]                                                              & TQ\times TQ \arrow[d, "\mathcal{P}"', bend right] \arrow[d, "\mathcal{P}", bend left, shift left] \\
            T\mathcal{D} \arrow[r, "{R_{\mathcal{D}, d}}"'] \arrow[u, "Ti_{\mathcal{D}}", hook] & \mathcal{D}\times \mathcal{D}                                                                    
        \end{tikzcd}
        \caption{Commutative diagram defining $R_{\mathcal{D}, d}=(\mathcal{P}\times \mathcal{P})\circ R_{d} \circ Ti_{\mathcal{D}}.$}
        \label{fig:enter-label}
    \end{figure}
    
    \begin{proposition}
        The map $R_{\mathcal{D}, d}$ is a discretization map on $\mathcal{D}$.
    \end{proposition}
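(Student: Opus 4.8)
The plan is to verify directly the two defining conditions of a discretization map in Definition~\ref{def:DiscreteMap2}, exploiting the factorization $R_{\mathcal{D}, d} = (\mathcal{P} \times \mathcal{P}) \circ R_d \circ Ti_{\mathcal{D}}$ together with the single algebraic identity that makes everything collapse, namely $\mathcal{P} \circ i_{\mathcal{D}} = \mathrm{id}_{\mathcal{D}}$. This identity holds because the orthogonal projector restricts to the identity on $\mathcal{D}$: a vector already lying in $\mathcal{D}_q$ is fixed by $\mathcal{P}_q$. Throughout, I would fix $w \in \mathcal{D}$ and write $\tilde{w} = i_{\mathcal{D}}(w) \in TQ$ for the same element regarded as a point of the ambient manifold $TQ$; smoothness of $R_{\mathcal{D}, d}$ is immediate since it is a composition of smooth maps.

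For the first condition, I would feed the zero vector $0_w \in T_w\mathcal{D}$ through the composition. Since $Ti_{\mathcal{D}}$ is fiberwise linear, it sends $0_w$ to the zero vector of $T_{\tilde{w}} TQ$; property (1) of $R_d$, applied on the manifold $TQ$, then gives $R_d(\tilde{w}, 0) = (\tilde{w}, \tilde{w})$; and finally $(\mathcal{P} \times \mathcal{P})(\tilde{w}, \tilde{w}) = (w, w)$ by the identity $\mathcal{P} \circ i_{\mathcal{D}} = \mathrm{id}_{\mathcal{D}}$. Hence $R_{\mathcal{D}, d}(w, 0) = (w, w)$, which is condition (1).

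For the second condition, I would restrict each component $R^a_{\mathcal{D}, d}$ to the fiber $T_w\mathcal{D}$, obtaining $R^a_{\mathcal{D}, d, w} = \mathcal{P} \circ R^a_{d, \tilde{w}} \circ T_w i_{\mathcal{D}}$, and differentiate at $0_w$ via the chain rule. The outer factor contributes $T_{\tilde{w}}\mathcal{P}$ (using $R^a_{d, \tilde{w}}(0_{\tilde{w}}) = \tilde{w}$ and $\mathcal{P}(\tilde{w}) = w$), the middle factor contributes $T_{0_{\tilde{w}}} R^a_{d, \tilde{w}}$, and the inner factor contributes $T_w i_{\mathcal{D}}$, since the derivative at the zero vector of a linear map between vector spaces is the map itself under the canonical identification $T_{0_w}(T_w\mathcal{D}) \simeq T_w\mathcal{D}$. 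Subtracting the two components, the common outer and inner factors are pulled out, while property (2) of $R_d$ collapses the middle difference $T_{0_{\tilde{w}}} R^2_{d, \tilde{w}} - T_{0_{\tilde{w}}} R^1_{d, \tilde{w}}$ to $\mathrm{Id}_{T_{\tilde{w}} TQ}$. What remains is $T_{\tilde{w}}\mathcal{P} \circ T_w i_{\mathcal{D}} = T_w(\mathcal{P} \circ i_{\mathcal{D}}) = T_w(\mathrm{id}_{\mathcal{D}}) = \mathrm{Id}_{T_w\mathcal{D}}$, which is exactly condition (2).

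The main obstacle is bookkeeping rather than substance: one must track the base point at which each derivative lives and handle the identifications $T_{0_w}(T_w\mathcal{D}) \simeq T_w\mathcal{D}$ and $T_{0_{\tilde{w}}}(T_{\tilde{w}} TQ) \simeq T_{\tilde{w}} TQ$ consistently, so that the chain-rule composition telescopes cleanly. The one genuinely structural input is that property (2) of $R_d$ holds on the \emph{full} space $T_{\tilde{w}} TQ$; this is what guarantees that, even after pre-composing with the inclusion $T_w i_{\mathcal{D}}$ of the subspace corresponding to $\mathcal{D}$ and post-composing with $T_{\tilde{w}}\mathcal{P}$, the difference of the two component derivatives reduces to $T_{\tilde{w}}\mathcal{P} \circ T_w i_{\mathcal{D}}$, which the differentiated identity $\mathcal{P} \circ i_{\mathcal{D}} = \mathrm{id}_{\mathcal{D}}$ identifies with the identity on $T_w\mathcal{D}$.
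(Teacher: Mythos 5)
Your proposal is correct and follows essentially the same route as the paper's proof: both verify the two conditions of Definition~\ref{def:DiscreteMap2} directly from the factorization $R_{\mathcal{D},d}=(\mathcal{P}\times\mathcal{P})\circ R_d\circ Ti_{\mathcal{D}}$, with the identity $\mathcal{P}\circ i_{\mathcal{D}}=\mathrm{id}_{\mathcal{D}}$ doing the work in both conditions and property (2) of $R_d$ collapsing the middle difference of tangent maps. The only difference is cosmetic: the paper computes the tangent maps via derivatives of curves $s\mapsto sX_{v_q}$, while you invoke the chain rule abstractly, which is if anything slightly cleaner bookkeeping of the same argument.
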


    \begin{proof}
        On one hand, it is straightforward to check that $
R_{\mathcal{D}, d}(0_{v_q}) = (v_q, v_q)$ for all $v_{q}\in \mathcal{D}_{q}$.
Indeed,

$$\begin{aligned}
R_{\mathcal{D}, d}(0_{v_q}) &= (\mathcal{P} \times \mathcal{P}) \circ R_{d} \circ Ti_{\mathcal{D}} (0_{v_q})\\[1mm]
&= (\mathcal{P} \times \mathcal{P}) \circ R_{d} \bigl(0_{i_{\mathcal{D}}(v_q)}\bigr)\\[1mm]
&= (\mathcal{P} \times \mathcal{P})\bigl(i_{\mathcal{D}}(v_q),\, i_{\mathcal{D}}(v_q)\bigr)\\[1mm]
&= \Bigl(\mathcal{P}\bigl(i_{\mathcal{D}}(v_q)\bigr),\, \mathcal{P}\bigl(i_{\mathcal{D}}(v_q)\bigr)\Bigr)\\[1mm]
&= (v_q, v_q).
\end{aligned}$$

On the other hand, the second condition, that is, the equality
$T_{0_{v_q}}R_{\mathcal{D}, d}^{2} - T_{0_{v_q}}R_{\mathcal{D}, d}^{1} = \mathrm{id}_{T D},$
might be checked using the definition of tangent map in terms of the derivative of a curve. Let $X_{v_q}\in T_{v_{q}}\mathcal{D}$. We have that
\[
\begin{aligned}
\Big(  & \left.T_{0_{v_q}}R_{\mathcal{D}, d}^{2} - T_{0_{v_q}}R_{\mathcal{D}, d}^{1}\right)(X_{v_q}) \\
&= \frac{d}{ds}\Bigg|_{s=0} \Bigl[ R_{\mathcal{D}, d}^{2}(sX_{v_q}) - R_{\mathcal{D}, d}^{1}(sX_{v_q}) \Bigr] \\[1mm]
&= \frac{d}{ds}\Bigg|_{s=0} \Bigl[ \mathcal{P} \circ R_{d}^{2} \circ T i_{\mathcal{D}}(sX_{v_q}) - \mathcal{P} \circ R_{d}^{1} \circ T i_{\mathcal{D}}(sX_{v_q}) \Bigr] \\[1mm]
&= T \mathcal{P} \left[ \frac{d}{ds}\Bigg|_{s=0} \Bigl( R_{d}^{2} \circ T i_{\mathcal{D}}(sX_{v_q}) - R_{d}^{1} \circ T i_{\mathcal{D}}(sX_{v_q}) \Bigr) \right] \\[1mm]
&= T \mathcal{P} \Bigl[\bigl(T_{0_{v_q}} R_{d}^{2} - T_{0_{v_q}}R_{d}^{1}\bigr)\bigl(T i_{\mathcal{D}}(X_{v_q})\bigr) \Bigr] \\[1mm]
&= T \mathcal{P} \circ T i_{\mathcal{D}} (X_{v_q})=T \left( \mathcal{P} \circ  i_{\mathcal{D}} \right) (X_{v_q}) \\[1mm]
&= X_{v_q},
\end{aligned}
\]
where in the above we have used that \(\mathcal{P} \circ i_\mathcal{D} = \mathrm{id}_\mathcal{D}\). Hence, \(R_{\mathcal{D}, d}\) satisfies the defining properties of a discretization map on \(\mathcal{D}\).
    \end{proof}

    The previous construction prescribes a recipe to obtain global discretization maps on $\mathcal{D}$ extending beyond the domain of a coordinate chart.

\section{Integrator for controlled Chaplygin system}\label{sec:example}

We want to study the optimal control of the so-called
\textit{Chaplygin sleigh}. The sleigh is a rigid body moving on a horizontal plane supported at three points, two of which slide freely without friction, while the third is a knife edge which allows no motion orthogonal to its direction.

We assume that the sleigh cannot move sideways. The configuration space will be identified with  $Q=\R^2\times\mathbb{S}^1$ with coordinates $q=(x,y,\theta)$, where $\theta$ is the angular orientation of the sleigh, and $(x, y)$ are the position of the contact point of the sleigh on the plane, . Let $m$ be the mass of the sleigh and $J+ma^{2}$ is the inertia about the contact point, where $J$ is the moment of inertia about the center of mass $C$ and $a$ is the distance from the center of mass to the knife edge. 

The control inputs are denoted by $u^{1}$ and $u^{2}.$ The first one
corresponds to a force applied perpendicular to the center of mass
of the sleigh and the second one is the torque applied about the
vertical axis.

The constraint is given by the no slip condition and is
expressed by $\sin\theta\, \dot{x}-\cos\theta \dot{y} = 0.$  Therefore the constraint
distribution $\mathcal{D}$ is given by the span of the vector fields
\begin{equation*}
X_{1}(q)=\frac{1}{J}\frac{\partial}{\partial\theta}, \quad
X_{2}(q)=\frac{\cos\theta}{m}\frac{\partial}{\partial x}+\frac{\sin\theta}{m}\frac{\partial}{\partial y}.
\end{equation*}

\subsection{The dynamics}

The Lagrangian $L:T(\R^2\times\mathbb{S}^{1})\ra\R$ is exclusively given by the kinetic energy of the body, which is a sum
of the kinetic energy of the center of mass and the kinetic energy
due to the rotation of the body $$L(q,\dot{q})=\frac{m}{2}(\dot{x}_C^{2}+\dot{y}_C^{2})+\frac{J}{2}\dot{\theta}^{2},$$ where $x_C = x+a\cos\theta$, $y_C = y+a\sin\theta$. 



The basis $\{X_{1},X_{2}\}$ induces adapted coordinates $(x,y,\theta,z^1,z^{2})\in\mathcal{D}$ in the following way: given the  vector fields $X_1$ and $X_2$ generating the distribution  we obtain the relations for $q\in \R^2\times\mathbb{S}^1$
\begin{eqnarray*}
X_1(q)&=&\rho_1^1(q)\frac{\partial}{\partial x}+\rho_1^2(q)\frac{\partial}{\partial y}+\rho_1^3(q)\frac{\partial}{\partial\theta},\\
X_2(q)&=&\rho_2^1(q)\frac{\partial}{\partial x}+\rho_2^2(q)\frac{\partial}{\partial y}+\rho_2^3(q)\frac{\partial}{\partial\theta}.
\end{eqnarray*} Then, $$\rho_{1}^{1}= \rho_{1}^{2}=\rho_{2}^{3}=0,\quad\rho_{1}^{3}=\frac{1}{J},\quad
\rho_{2}^{1}=\frac{\cos\theta}{m},\quad\rho_{2}^{2}=\frac{\sin\theta}{m}.
$$ Each element $v\in {\mathcal D}_q$ is expressed as a linear combination of these vector fields:
\(
v=z^1 X_1(q)+ z^{2} X_2(q), \ q\in \R^2\times\mathbb{S}^1.
\)

Therefore, the vector subbundle $\tau_{\mathcal{D}}:{\mathcal D}\rightarrow \R^2\times\mathbb{S}^1$ is locally described by the coordinates $(x,y,\theta; z^1, z^{2})$; the first three, denoted by $q^{i}$, for the base and the last two, denoted by $y^{A}$, for the fibers. 
As a consequence, ${\mathcal D}$ is described by the conditions: 
$$
\dot{x}=\frac{\cos\theta}{m}z^{2},\quad
\dot{y}=\frac{\sin\theta}{m}z^{2},\quad
\dot{\theta}=\frac{1}{J}z^1,
$$
as  a vector subbundle of $TQ$.

The restricted Lagrangian function in the new adapted coordinates is
given by
$$\ell(x,y,\theta,z^1,z^{2})=\frac{1}{2m}(z^{2})^2+\frac{b}{2J}(z^1)^2 \hbox{ where } b=\frac{a^2m+J}{J}.$$

Therefore, the equations of motion are
$$
\dot{z}^1=0,\quad \dot{z}^{2}=0,\quad
\dot{x}=\frac{\cos\theta}{m}z^{2},\quad\dot{y}=\frac{\sin\theta}{m}z^{2},\quad\dot{\theta}=\frac{1}{J}z^1.
$$

\subsection{The optimal control problem}

Now, by adding controls into our picture, the controlled Euler-Lagrange equations are written as
$$\dot{z}^1=u^{2},\; 
\dot{z}^{2}=u^{1},\; 
\dot{x}=\frac{\cos\theta}{m}z^{2},\; \dot{y}=\frac{\sin\theta}{m}z^{2},\; \dot{\theta}=\frac{1}{J}z^1.
$$

The optimal control problem consists on finding an admissible curve
satisfying the previous equations given boundary conditions on
$\mathcal{D}$ and minimizing the functional
$\mathcal{J}(x,y,\theta,z^1,z^{2},u^{1},u^{2})=\frac{1}{2}\int_{0}^{T}\left((u^{1})^{2}+(u^{2})^{2}
\right)dt$, for the cost function $C:\mathcal{D}\times U\ra\R$
given by
\begin{equation}\label{costchaplygin}C(x,y,\theta,z^1,z^{2},u^{1},u^{2})=\frac{1}{2}((u^{1})^2+(u^{2})^2).\end{equation}

As before, the optimal control problem is equivalent to solving the
constrained  optimization problem determined by
$\mathcal{L}:\mathcal{D}^{(2)}\ra\R,$ where

$$
\mathcal{L}(x,y,\theta,z^1,z^{2},\dot{z}^1,\dot{z}^{2})=\frac{1}{2}\left((\dot{z}^1)^2+(\dot{z}^{2})^2\right).$$ Here, $\mathcal{D}^{(2)}$ is a submanifold of the vector bundle $T\mathcal{D}$ over $\mathcal{D}$ defined by $(x,y,\theta,z^1,z^{2},\dot{x},\dot{y},\dot{\theta},\dot{z}^1,\dot{z}^{2})\in T\mathcal{D}$ satisfying
$$\dot{x}-\frac{\cos\theta}{m}z^{2}=0,\dot{y}-\frac{\sin\theta}{m}z^{2}=0,\dot{\theta}-\frac{1}{J}z^1=0.$$

Denoting by $(x,y,\theta,z^1,z^{2},p_x,p_y,p_{\theta},p_1,p_2)$ local
coordinates on $T^{*}\mathcal{D}$ the dynamics of the optimal control
problem for this nonholonomic system is determined by the
Hamiltonian function $\mathcal{H}:T^{*}\mathcal{D}\ra\R,$
{$$ 
\mathcal{H}=\frac{1}{2}(p_{1}^{2}+p_2^{2})+\frac{p_{\theta}}{J}z^{1}+p_{x}\frac{\cos\theta}{m}z^{2}+p_{y}\frac{\sin\theta}{m}z^{2}.$$}

The Hamiltonian equations of motion are

\begin{eqnarray*}
\dot{z}^{1}&=&p_{1},\quad\dot{z}^{2}=p_{2},\quad\dot{p}_{x}=0,\quad \dot{p}_{y}=0,\\
\dot{p}_{\theta}&=&p_{x}\frac{\sin\theta}{m}z^{2}-p_{y}\frac{\cos\theta}{m}z^{2},\\
\dot{p}_{1}&=&-\frac{p_{\theta}}{J},\quad\dot{p}_{2}=-p_{x}\frac{\cos\theta}{m}-p_{y}\frac{\sin\theta}{m}.
\end{eqnarray*}

\subsection{Construction of a retraction map}\label{sec6}

    Consider a retraction map on $T(\R^{2}\times \mathbb{S}^{1})$ denoted by $R_{d}^{T}$ of the form
    $$R_{d}^{T}(q,v,\dot{q}, \dot{v})=(q-\delta \dot{q}, v-\delta \dot{v}, q+(1-\delta)\dot{q}, v+(1-\delta) \dot{v}).$$

    Let us consider first on the domain of the coordinate chart $(x,y,\theta,z^{1}, z^{2})$. Noting that $\mathcal{D}$ is a distribution on $\R^{2}\times \mathbb{S}^{1}$, the inclusion map has coordinate expression
    $$i_{\mathcal{D}}(x,y,\theta, z^{1}, z^{2})=\left(x,y,\theta, \frac{\cos \theta}{m} z^{2}, \frac{\sin \theta}{m}z^{2}, \frac{z^{1}}{J}\right)$$
    and the orthogonal projection which in natural coordinates $(x,y,\theta,\dot{x},\dot{y}, \dot{\theta})$ reads
    $$\mathcal{P}(x,y,\theta,\dot{x},\dot{y}, \dot{\theta})= J \dot{\theta} X_{1} + m (\dot{x} \cos \theta + \dot{y} \sin \theta) X_{2}.$$
    Hence, we can compute the discretization map $R_{\mathcal{D}, d} = (\mathcal{P}\times \mathcal{P}) \circ R_{d}^{T} \circ T i_{\mathcal{D}}$:
    \begin{equation*}
        \begin{split}
            R_{\mathcal{D},d}(q^{i}, y^{A}, \dot{q}^{i}, \dot{y}^{A}) = &( q-\delta\dot{q},  y^{A}-\delta\dot{y}^{A}, \\
             & q + (1-\delta)\dot{q}, y^{A}+(1 - \delta)\dot{y}^{A}).
        \end{split}
    \end{equation*}
    
    Note that, in general, given the discretization $R_{d}^T$ above on natural fiber bundle coordinates on $TQ$, there is no reason why the discretization $R_{\mathcal{D},d}$ should preserve the same form in the adapted chart on $\mathcal{D}$. In fact, in the general situation there should appear derivatives of the function $\rho_{A}^{i}$ with respect to configuration variables $q^{j}$. However, in this example, these terms cancel out.

    The symplectic integrator for $\mathcal{H}$ uses Proposition~\ref{Prop:symplectic:integrator} and $\left(R_{\mathcal{D},d}\right)^{T^*}$ to generate the following symplectic numerical scheme on $T^*\mathcal{D}$: 
    \begin{equation*}
        \begin{split}
            & \frac{q_{1}^{i}-q_{0}^{i}}{h} =  \tilde{\rho}^{i}_{A}\frac{y_{1}^{A}+y_{0}^{A}}{2},\quad
            \frac{y_{1}^{A}-y_{0}^{A}}{h} =  \frac{p_{A,1}+p_{A,0}}{2},\\
            & \frac{p_{i,1}-p_{i,0}}{h} = -\frac{p_{j,0}+p_{j,1}}{2}\frac{\partial \tilde{\rho}_{A}^{j}}{\partial q^{i}}\frac{y_{1}^{A}+y_{0}^{A}}{2},\\
            & \frac{p_{A,1}-p_{A,0}}{h} = -\frac{p_{j,0}+p_{j,1}}{2}\tilde{\rho}_{A}^{j}.
        \end{split}
    \end{equation*}
	where $\tilde{\rho}_{A}^{i} = \rho_{A}^{i}(\tfrac{q_{0}+q_{1}}{2})$  and $\frac{\partial \tilde{\rho}_{A}^{j}}{\partial q^{i}}=\frac{\partial \rho_{A}^{j}}{\partial q^{i}}(\tfrac{q_{0}+q_{1}}{2})$. The resulting numerical scheme is by construction on $\mathcal{D}$. However, in the following experiments we will test to what extent the sequence of configuration variables $q_{k}=(x_{k}, y_{k}, \theta_{k})$ is preserving a discrete version of the constraints to check if the discrete velocities are compatible with them. For that matter, we will test the preservation of the function
    \begin{equation}\label{eq:phi_d}
        \begin{split}
            \phi_{d}(q_{k},q_{k+1})=(x_{k+1} & -x_{k})\sin\left(\frac{\theta_{k+1}+\theta_{k}}{2}\right) \\
            & - (y_{k+1}-y_{k})\cos\left(\frac{\theta_{k+1}+\theta_{k}}{2}\right).
        \end{split}
    \end{equation}
    
    \subsection{Numerical results}

The numerical simulation has the initial conditions $(x_0,y_0, \theta_0, z^1_0, z^{2}_0, p_{x, 0}, p_{y, 0}, p_{\theta, 0}, p_{1, 0}, p_{2, 0})$ set to $(1,1,\pi, 0.05, 0.05, 0, 1, 0, 0, 0)$ and ran for a time span of $20$ units of time with a fixed time step of $h=0.005$. The constants $m$, $J$ and $b$ were set to $1$.

We have compared five different algorithms: two integrators based on Proposition \ref{Prop:symplectic:integrator} using the retraction map $R_{\mathcal{D},d}$ of the form above; a second-order explicit Runge-Kutta method; a fourth-order explicit Runge-Kutta method; and, finally, a fourth-order symplectic Runge-Kutta method.

The first integrator is based on the midpoint discretization map, i.e., the map obtained by setting $\delta=1/2$. The second one is the composition of the initial point discretization map, obtained by setting $\delta=0$ with the final point discretization map, obtained by setting $\delta=1$. Indeed, as pointed out in \cite{DM}, the composition of different discretization maps holds higher-order symplectic numerical integrators. This particular choice coincides with the well-known Störmer-Verlet method for Hamiltonian systems.

One of the advantages of symplectic integrators is the excellent energy preservation and, as a by-product, a good qualitative behavior during the integration over long periods of time. However, as mentioned in the report \cite{chyba}, in optimal control problems the time span of integration is usually relatively short and it is natural to question the importance of symplectic integrators in this setting.

In Figure~\ref{rk:constraint:plot}, we compare the preservation of the discrete constraint $\phi_{d}$ in Equation~\eqref{eq:phi_d} using the two Runge-Kutta methods and the retraction based method with $\delta=1/2$. We first observe that the constraint preservation is even better than the higher-order Runge-Kutta at the chosen time-step $h=0.005$.

\begin{figure}[htb!]
    \centering
    \includegraphics[scale=0.35]{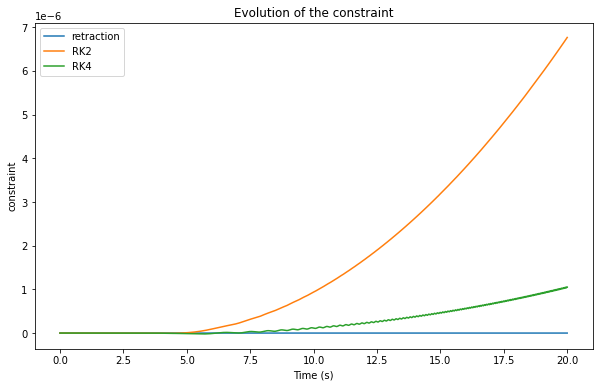}
    \caption{Plot of the discrete constraint function $\phi_{d}$ as the number of iterations increases. The plot depicts standard second and fourth-order Runge-Kutta methods, against the retraction integrator with $\delta=1/2$.}
    \label{rk:constraint:plot}
\end{figure}

If we zoom in and compare the behavior of the two symplectic integrators, i.e., the retraction-based method with $\delta=1/2$ and the Störmer-Verlet method, the constraint preservation in the latter is even more spectacular (see Figure \ref{rk:constraint:plot:symp})

\begin{figure}[htb!]
    \centering
    \includegraphics[scale=0.35]{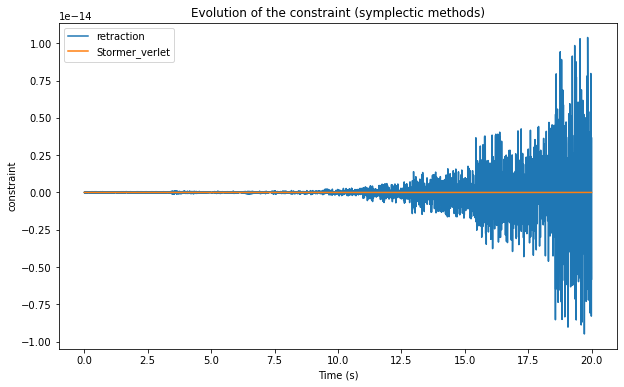}
    \caption{Plot of the discrete constraint function $\phi_{d}$ as the number of iterations increases. The plot depicts the two (symplectic) integrators based on retractions.}
    \label{rk:constraint:plot:symp}
\end{figure}

Despite the excellent constraint preservation, the main factor for the preservation of the Hamiltonian function seems to be the order of the method. In Figure \ref{rk:energy:plot:order2}, we can see that the energy is much better preserved by the higher-order Runge-Kutta method than by the (symplectic) retraction integrator.

\begin{figure}[htb!]
    \centering
    \includegraphics[scale=0.35]{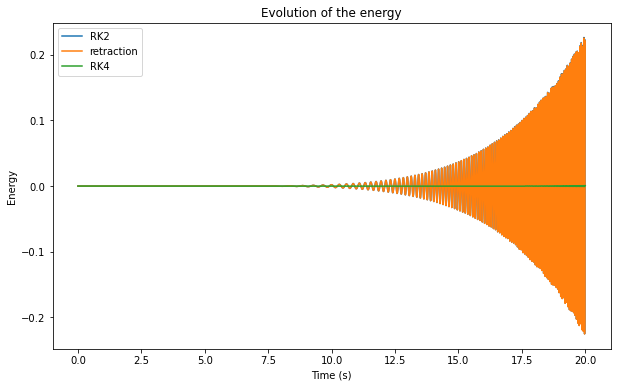}
    \caption{Plot of the Hamiltonian function (energy) as the number of iterations increases. The plot depicts a symplectic fourth order method against a standard fourth-order Runge-Kutta method.}
    \label{rk:energy:plot:order2}
\end{figure}

If the symplectic method is of fourth-order, such as the one in Figure \ref{rk:energy:plot}, then the energy performance of the symplectic method is slightly better than the non-symplectic.

\begin{figure}[htb!]
    \centering
    \includegraphics[scale=0.35]{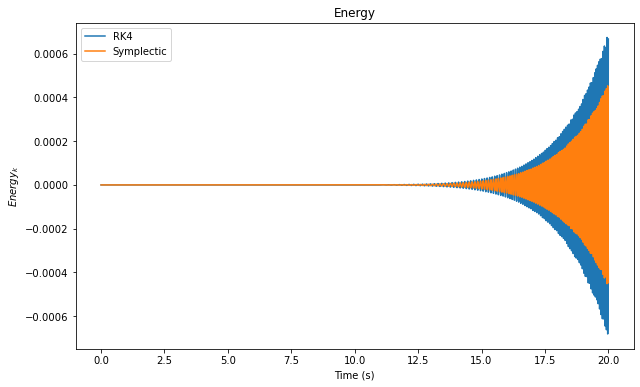}
    \caption{Plot of the Hamiltonian function (energy) as the number of iterations increases. The plot depicts a symplectic fourth order method against a standard fourth-order Runge-Kutta method.}
    \label{rk:energy:plot}
\end{figure}

However, the precise reason why the symplectic methods are not preserving the Hamiltonian function much better than the non-symplectic methods deserves further study. Our best hypothesis is that the maximum time-step for which backward error analysis guarantees that the modified Hamiltonian is preserved must be unreasonably small (see \cite{hairer}), given that we have experimented with time steps of the order of $h\approx 10^{-5}$ and the behavior persists. For larger time steps, the most important attribute controlling the error in the Hamiltonian function seems to be the order of the numerical scheme.



\section{Conclusions \& Further applications}
In this paper, we have designed geometric integrators for optimal control problems of fully-actuated nonholonomic systems that preserve symplecticity of PMP equations and exactly preserve the nonholonomic constraints. Our method also opens the door to study control problems for underactuated nonholonomic problems and to explore methods preserving the symmetry of the initial system.

\end{document}